\documentclass[a4paper]{amsart}
\usepackage[english]{babel}
\usepackage{booktabs}
\usepackage{longtable}
\usepackage[T1]{fontenc}

\newcommand{\Z}{\mathbb{Z}} 
\newcommand{\Q}{\mathbb{Q}} 
\newtheorem{theo}{Theorem}

\theoremstyle{definition}
\newtheorem*{defn}{Definition}

\theoremstyle{remark}
\newtheorem*{rem}{Remark}

\begin{document}

\title[Elliptic curves induced by Diophantine triples]
{Elliptic curves induced by Diophantine triples}

\author[A. Dujella]{Andrej Dujella}
\address{
Department of Mathematics\\
University of Zagreb\\
Bijeni{\v c}ka cesta 30, 10000 Zagreb, Croatia
}
\email[A. Dujella]{duje@math.hr}

\author[J. C. Peral]{Juan Carlos Peral}
\address{
Departamento de Matem\'aticas\\
Universidad del Pa\'{\i}s Vasco\\
Aptdo. 644, 48080 Bilbao, Spain
}
\email[J. C. Peral]{juancarlos.peral@ehu.es}

\thanks{The first author was supported by the Croatian Science Foundation
under the project no. 6422 and by the QuantiXLie Centre of Excellence, a project
 co-financed by the Croatian Government and European Union through the
 European Regional Development Fund - the Competitiveness and Cohesion
 Operational Programme (Grant KK.01.1.1.01.0004).
 The second author     was supported by  the spanish grant   MINECO R14/P78.       }

\subjclass[2010]{11G05}
\keywords{Diophantine triples, elliptic curves, rank, torsion group}

\begin{abstract}
Given a  Diophantine triple  $\{ c_1(t),c_2(t), c_3(t)\}$, the
elliptic curve over $\Q (t)$  induced by this triple, i.e.
\[
y^2= ( c_1(t)\, x+1) ( c_2(t)\, x+1) ( c_3(t)\, x+1),
\]
can have as torsion group  one of the non cyclic groups in Mazur's theorem, i.e.
$\Z /  2 \Z\times\Z / 2 \Z$,  $\Z /  2 \Z\times\Z / 4 \Z$,  $\Z /  2 \Z\times\Z / 6 \Z$ or  $\Z /  2 \Z\times\Z / 8 \Z$.
  In this paper we present results concerning the rank over $\Q (t)$ of these curves improving in some of the cases the previously known results.

\end{abstract}

\maketitle

\section{Diophantine quadruples and elliptic curves}
\begin{defn}
 A set $\{ c_1, c_2,\ldots, c_m    \}$ of non-zero integers (rationals) is  called
 a (rational) $D(n)$-$m$-tuple if $c_i\cdot c_j+n$ is a perfect square for all $1\le i < j\le m$.
 A $D(1)$-$m$-tuple is also called a Diophantine $m$-tuple.
\end{defn}

Let $\{  c_1, c_2, c_3, c_4 \}$  be a rational Diophantine quadruple. Consider a subtriple  $\{c_1, c_2 , c_3\}$ and define an  elliptic curve by the equation
\begin{equation}\label{dioph}
\tag{$E$} y^2=( c_1 \,x+ 1)( c_2\, x+1) (c_3\, x +1).
\end{equation}
We say that $E$ is the  elliptic curve induced by the Diophantine triple $\{  c_1, c_2, c_3 \}$.
Let
\[
c_i c_j+1=t_{i,j}^2,\quad 1\le i<j\le4.
\]
Then the curve $E$ has the following three rational   points of order $2$:
\[
T_1=[\,-1/c_1,0\,],\qquad T_2=[\,-1/c_2,0\,],\qquad
T_3=[\,-1/c_3,0\,],
\]
and at least three other rational points:
\begin{equation}\label{ratpoints}
\left\{\begin{aligned}
P_1&=[\,0,1\,],\\
P_2&=[\,c_4, t_{1,4}\,t_{2,4}\,t_{3,4}\,],\\
P_3&=\Bigl[\,\frac{ t_{1,2}\,t_{1,3}+ t_{1,2}\,t_{2,3}+ t_{1,3}\,t_{2,3} +1}{c_1 c_2 c_3},\\
&\qquad\frac{(t_{1,2}+t_{1,3})(t_{1,2}+t_{2,3})(t_{1,3}+t_{2,3})}{ c_1 c_2 c_3}\,\Bigr].
\end{aligned}\right.
\end{equation}

Our goal in this article is to   show that Diophantine triples and quadruples are good tools in the search for high rank elliptic curves having as torsion group one of the non-cyclic groups in Mazur's theorem.

In the case of torsion $\Z /  2 \Z\times\Z / 2 \Z$  we show that adequate specialization of the parameters induce subfamilies of curves with rank~$4$, rank~$5$ and rank~$6$ and torsion group $\Z /  2 \Z\times\Z / 2 \Z$.   We use a set of quadruples presented in \cite{D1}. The families of rank~$6$ over $\Q(t)$ represent an
improvement over the known results of curves induced by Diophantine triples.
Namely, in \cite{ADP} a family of rank $\geq 5$ over $\Q(t)$ was constructed.
In the general case (not conditioned to be induced by Diophantine triples)  Elkies constructed a family with rank $\geq 7$, see \cite{D2}.

In our paper  \cite{DP1}  we constructed a curve over $\Q(t)$  induced by Diophantine triples   having rank $4$  and torsion  group  is $\Z /  2 \Z\times\Z / 4 \Z$. We also show an example of a  curve with rank $9$. These are the highest-rank results  known for that torsion.  We  will give an overview of these results in order to complete the presentation.

In the case  of torsion $\Z /  2 \Z\times\Z / 6 \Z$  we present three  curves over $\Q(t)$  with rank $2$ induced by Diophantine triples.  This ties the results for the  general case,  see \cite{DP2}. We also show that the curve with rank $6$ over $\Q$ (which is the record for this torsion group and was found by  Elkies  see \cite{D2}) is  induced by a Diophantine triple.

Finally in the case of torsion $\Z /  2 \Z\times\Z / 8 \Z$ it is known that all such curves are induced by Diophantine triples.

Let us mention that in \cite{DJBS} the authors constructed elliptic curves induced
by Diophantine triples with torsion groups
$\Z/2\Z \times \Z/10\Z$, $\Z/2\Z \times \Z/12\Z$ and $\Z/4\Z \times \Z/4\Z$
over quadratic fields.

\begin{rem} It has been conjectured in the past by many authors that there exist elliptic curves of arbitrarily high rank, and even for each of the torsion groups in Mazur's theorem.

However there are also heuristic arguments that suggest the boundedness of the rank of elliptic curves, see section $8.3$ in \cite{PPVW}, for example. According to this heuristic only a finite number of curves would have rank higher that $21$. The known lower bounds for infinite families fit very well with the bounds given  by these heuristic arguments, see \cite{D2}.

In particular for the following torsion groups the conjectured bound by the heuristic and the known bounds coincide:
$\Z / 6 \Z$  (bound $5$), $\Z / 8 \Z$  (bound $3$), $\Z/2\Z \times \Z/4\Z$  (bound $5$) and $\Z/2\Z \times \Z/6\Z$  (bound $3$).
\end{rem}

\section{Torsion   $\Z /  2 \Z\times\Z / 2 \Z$ and rank $6$ over $\Q(t)$}\label{sec2}

\subsection{Rank $\geq 3$ over $\Q(t)$. } 
Since an elliptic curve induced by a Diophantine triple has three rational points of order $2$, 
its torsion group contains $\Z/2\Z \times \Z/2\Z$. 

In \cite{D1}   several families of $D(n)$-quadruples are described.
We  will use for our construction the one given by
\[
\{a, a(k+1)^2-2\,k, a(2\,k+1)^2-8\,k-4, a\,k^2-2\,k-2\}.
\]
For each $a$ and $k$ this quadruple is a $D(2\,a(2\,k+1)+1)$-quadruple.
Now we  specialize to the following value of  $k$:
\[
k =\frac{-1-2\,a+n^2}{4\,a}.
\]
The resulting quadruple is a $D(n^2)$-quadruple and once divided by $n$ we get the following rational $D(1)$-quadruple:
\begin{equation}\label{c_quad}
\left\{\begin{aligned}
c_1(a,n)&=\dfrac{a}{n},\\
c_2(a,n)&=\dfrac{((n-3)(n-1)+2\,a)((n+1)(n+3)+2\,a)}{16\,a\,n},\\
c_3(a,n)&=\dfrac{(n-3)(n-1)(n+1)(n+3)}{4\,a\,n},\\
c_4(a,n)&=\dfrac{((n-3)(n-1)-2\,a)((n+1)(n+3)-2\,a)}{16\,a\,n}.
\end{aligned}\right.
\end{equation}
In the terminology of \cite{G}, \eqref{c_quad} is an irregular and twice semi-regular Diophantine quadruple.
A Diophantine triple $\{a_1,a_2,a_3\}$ is regular if $(a_3-a_2-a_1)^2 = 4(a_1a_2+1)$, while a Diophantine quadruple
$\{a_1,a_2,a_3,a_4\}$ is regular if $(a_4+a_3-a_1-a_2)^2 = 4(a_1a_2+1)(a_3a_4+1)$. It can be checked that
\eqref{c_quad} is irregular, but it contains two regular triples:
$\{c_1,c_2,c_4\}$ and $\{c_2,c_3,c_4\}$.

Now we define the  elliptic curve associated to the triple $\{ c_1, c_2, c_3\}$ as explained above, i.e.:
\[
y^2=( c_1(a,n) x+1)(c_2(a,n) x+1)( c_3(a,n) x+1).
\]
Note that we choose an irregular triple which is a subtriple of an irregular quadruple.
Otherwise, by \cite{D3}, the points $P_1$, $P_2$, $P_3$ would not be independent.

Besides the $2$-torsion points, this curve has the points with $x$-coordinate given by
\[
0,\quad c_4(a,n)\quad\text{and}\quad\frac{t_{1,2}\,t_{1,3}+ t_{1,2}\,t_{2,3}+ t_{1,3}\,t_{2,3} +1}{c_1(a,n)c_2(a,n)c_3(a,n)},
\]
where as before $t_{i,j}=t_{i,j}(a,n)=\sqrt{c_i(a,n) c_j(a,n)+1}$, $1\le i<j\le3$. In terms of $a$ and $n$, the  three rational points~\eqref{ratpoints} are:
\begin{align*}
P_1&=[\,0,1\,],\\
P_2&=\Bigl[\frac{(n^2+4\,n-2\,a+3)(n^2-4\,n-2\,a+3)}{16\,a\,n},\\
&\qquad-\frac{(n^2-2\,a+3)(n^4-10\,n^2-4\,a^2+9)(n^4-2\,a\,n^2-10\,n^2-6\,a+9)}{512\,a^2\,n^3}\Bigr],\\
P_3&=\Bigl[\frac{6\,n}{(n-3)(n+3)},\frac{(n^2+6\,a-9)(3\,n^2+2\,a-3)}{4\,a(n-3)(n+3)}\Bigr].
\end{align*}

\begin{theo}\label{teo:1}
The curve $y^2=( c_1(a,n) x+1)( c_2(a,n) x+1)( c_3(a,n) x+1)$
has torsion group containing $\Z/2\Z\times\Z/2\Z$ and rank~$\geq 3$ over $\Q(n,a)$.
The  points $P_1$, $P_2$ and $P_3$ are of infinite order and independent.
\end{theo}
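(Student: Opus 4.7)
The plan splits naturally into a torsion check and a rank lower bound via specialization. For the torsion, the three linear factors in the defining equation of $E$ give three rational $2$-torsion points $T_1,T_2,T_3$, and these are pairwise distinct as rational functions since $c_1(a,n)$, $c_2(a,n)$, $c_3(a,n)$ are pairwise distinct rational functions of $a$ and $n$; hence $\Z/2\Z \times \Z/2\Z \subseteq E(\Q(n,a))_{\mathrm{tors}}$.

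For the rank, I would apply Silverman's specialization theorem. Since $\Q(n,a)$ is a function field in two variables, specialization proceeds in two stages: first specialize $a$ to a rational value (or a polynomial in $n$) to descend to a curve over $\Q(n)$, then specialize $n$ to land in $\Q$. Independence in the Mordell--Weil group lifts back through both stages, so it suffices to exhibit a single pair $(a_0,n_0)\in\Q^2$ such that (i) the curve $E_{a_0,n_0}$ is non-singular (the three $c_i(a_0,n_0)$ are nonzero and pairwise distinct, and none of the $P_i$ specializes to a $2$-torsion point or to infinity), and (ii) the specialized points $P_1(a_0,n_0)$, $P_2(a_0,n_0)$, $P_3(a_0,n_0)$ are $\Z$-independent modulo torsion in $E_{a_0,n_0}(\Q)$.

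The concrete verification is then to compute the $3 \times 3$ N\'eron--Tate height pairing matrix of $P_1,P_2,P_3$ at the chosen $(a_0,n_0)$ and check that its determinant (the regulator of the subgroup they generate) is nonzero. With the explicit coordinates of the $P_i$ displayed above, this reduces to a routine calculation in Magma, PARI/GP or SageMath for small rational $a_0$ and integer $n_0$.

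The main obstacle, and the real content of the theorem, is ensuring that the three points are not forced to satisfy an identical linear relation over $\Q(n,a)$, in which case no specialization could rescue independence. This is precisely the role of the appeal to \cite{D3} in the text: the fact that $\{c_1,c_2,c_3\}$ is an irregular triple contained in an irregular quadruple rules out such a universal relation among $P_1,P_2,P_3$, whereas either regularity condition would produce one. Once this structural obstruction is dispatched, a generic choice of $(a_0,n_0)$ should succeed, and the proof concludes with the numerical height computation.
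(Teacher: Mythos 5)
Your proposal is correct and follows essentially the same route as the paper: the authors likewise note the three rational $2$-torsion points coming from the linear factors, then invoke the homomorphism property of the specialization map and check at the single specialization $(a,n)=(2,5)$ that the points $Q_1=[0,1]$, $Q_2=[11/10,-1173/125]$, $Q_3=[15/8,133/8]$ are independent (via \texttt{mwrank}), which is exactly your one-good-pair criterion with a regulator-style verification. One small clarification: the appeal to \cite{D3} is motivation for having chosen an irregular triple inside an irregular quadruple (a regular configuration would force a relation), not a logical ingredient of the proof --- a single successful specialization by itself excludes any $\Z$-linear relation over $\Q(n,a)$, so there is no separate ``structural obstruction'' that must be dispatched before the numerical check.
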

\begin{proof}

Since the specialization map  is always a homomorphism, see  e.g.~\cite{S}, it is enough to prove that
there exist values of $a$ and $n$ such that the specialized three points are $\Q$-independent.
Consider for example  $a=2$ and $n=5$. Then  the specialized points are
\[
Q_1=[0,1] ,\quad Q_2=[11/10, -1173/125],\quad Q_3=[15/8, 133/8].
\]
A calculation using J.~Cremona's program \texttt{mwrank} \cite{mwrank} shows that the elliptic curve
induced by the triple having these parameters has rank $3$, and from obtained generators it is easy to check that the three points $Q_1$, $Q_2$ and $Q_3$ are independent.
\end{proof}

The symbolic calculations in this and the next sections were carried out with \textit{Mathematica}\textsuperscript\circledR \cite{Math}.

\subsection{Search for higher rank}\label{subsec2.2}

Now we look for conditions on $a$ and $n$ such that there are new rational points on the curve. This task is made simpler by means of a change of variable. The coordinate transformation
\[
x\mapsto c_1(a,n) c_2(a,n) c_3(a,n)\,x,\quad y\mapsto c_1(a,n) c_2(a,n) c_3(a,n)\,y
\]
applied to the curve leads to
\[
y^2=(x+c_1(a,n) c_2(a,n))(x+c_1(a,n) c_3(a,n))(x+c_2(a,n) c_3(a,n)).
\]
Next, the change $x\mapsto x-c_1(a,n) c_2(a,n)$ transforms it into
\begin{multline*}
y^2=x(x+c_1(a,n) c_3(a,n)-c_1(a,n) c_2(a,n))\\
\times(x+c_2(a,n) c_3(a,n)- c_1(a,n) c_2(a,n)).
\end{multline*}
From this point on, in order to avoid denominators, we will make, when necessary, the appropriate change of variables to write the curve as
\begin{equation}\label{ec}
y^2= x^3+A\,x^2+B\,x
\end{equation}
where $A(a,n)$ and $B(a,n)$ are integral. This leads to the following values of the coefficients $A$ and $B$:
\begin{align*}
A(a,n)&=81+108\,a+108\,a^2-96\,a^3-32\,a^4-180\,n^2-84\,a\,n^2-120\,a^2\,n^2\\
&\qquad-32\,a^3\,n^2+118\,n^4-28\,a\,n^4+12\,a^2\,n^4-20\,n^6+4\,a\,n^6+n^8,\\
B(a,n)&=4\,a^2(9+2\,a-n^2)(3+2\,a-4\,n+n^2)(3+2\,a+4\,n+ n^2)\\
&\qquad\times(-3+2\,a+3\,n^2)(-9+4\,a^2+10\,n^2-n^4).
\end{align*}

Finally, the $x$-coordinates of the three rational points corresponding (under the coordinate
transformation) to $P_1$, $P_2$ and $P_3$ are
\begin{align*}
x_1&=4\,a^2(3+2\,a-4\,n+n^2)(3+2\,a+4\,n+n^2),\\
x_2&=\frac{(3+2\,a-4\,n+n^2)(3+2\,a+4\,n+n^2)(9-6\,a-10\,n^2-2\,a\,n^2+n^4)^2}{16\,n^2},\\
x_3&=2\,a(3+2\,a-4\,n+n^2)(3+2\,a+4\,n+n^2)(-3+2\,a+3\,n^2).
\end{align*}

\subsection{Construction of a curve of rank $4$ over $\Q(t)$}\label{subr4}

Now we look for those  polynomial factors of $B$ that can be conditioned in a simple way to yield a new point in the curve.

The condition for
$(3 + 2 a - 4 n + n^2) (-3 + 2 a + 3 n^2) (-9 + 4 a^2 + 10 n^2 - n^4)$
to became the $X$ coordinate of a new point   is that
 $2 (9 + 6 a + 8 a^2 - 18 n -
   4 a n + 8 n^2 - 2 a n^2 + 2 n^3 - n^4)$
   converts into a square.  This can be achieved with the value $n=7/3$.
   The coefficients of the curve are
   \begin{align*}
A(a)&=-2 (-51200 + 109440 a + 38880 a^2 + 55404 a^3 + 6561 a^4)\\
B(a)&=243 a^2 (20 + 3 a) (-4 + 9 a) (16 + 9 a) (80 + 9 a) (320 + 81 a^2),
\end{align*}
and the  $x$-coordinates of the preceding points jointly with the new one are
\begin{equation}\label{xcoord}
\left\{
\begin{aligned}
x_1&= 81 a^2 (-4 + 9 a) (80 + 9 a)\\
x_2&=27 a (20 + 3 a) (-4 + 9 a) (80 + 9 a)\\
x_3&=\frac{1}{441} (-4 + 9 a) (80 + 9 a) (160 + 171 a)^2\\
x_4&=3 (20 + 3 a) (-4 + 9 a) (320 + 81 a^2).
\end{aligned}
\right.
\end{equation}
This is a curve with rank $\geq 4$ over  $\Q(a)$ since it can be proved that the four points quoted above  are independent. Indeed, by taking $a=7$ we find that the specialized points are independent.
Moreover, this specialization satisfies the conditions of \cite[Theorem 1.1]{GT2} and thus shows that
the rank over $\Q(a)$ is exactly equal to $4$ and the torsion group is equal to $\Z/2\Z \times \Z/2\Z$.

The corresponding quadruple is
\begin{equation}\label{quadruple}
\left\{
\begin{aligned}
q_1&=-\dfrac{3 a}{7},\\
q_2&=-\dfrac{(80 + 9 a) (-4 + 9 a)}{756\, a},\\
q_3&=\dfrac{320}{189\, a},\\
q_4&=-\dfrac{(4 + 9 a) (-80 + 9 a)}{756\, a}.
\end{aligned}
\right.
\end{equation}

\begin{theo}\label{teo:2}
The elliptic curve induced by the first three components of the Diophantine quadruple~\eqref{quadruple}
has torsion $\Z/2\Z\times  \Z/2\Z$ and  rank equal to $4$  over $\Q(a)$.
The  points with $x$-coordinate given in~\eqref{xcoord} are of infinite order and independent.
\end{theo}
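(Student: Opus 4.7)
The plan is to parallel the method of Theorem~\ref{teo:1}: use Silverman's specialization homomorphism (\cite{S}) to reduce $\Q(a)$-independence of the four generic points to $\Q$-independence of a single specialization, and then upgrade the resulting rank inequality to an equality (and identify the torsion) via the Gusi\'c--Tadi\'c criterion of \cite[Theorem 1.1]{GT2}.

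First I would check that the four $x$-coordinates listed in \eqref{xcoord} really do define $\Q(a)$-points of the curve. Three of them are the images of the points $P_1,P_2,P_3$ of Theorem~\ref{teo:1} under the substitution $n=7/3$ together with the coordinate changes of Section~\ref{subsec2.2}; the fourth corresponds to the factor $(3+2a-4n+n^2)(-3+2a+3n^2)(-9+4a^2+10n^2-n^4)$ of $B$, whose companion $y$-coordinate is a square precisely because the residual factor $2(9+6a+8a^2-18n-4an+8n^2-2an^2+2n^3-n^4)$ becomes a perfect square at $n=7/3$ by construction. Because specialization is a group homomorphism, the lower bound $\mathrm{rank}_{\Q(a)}E\geq 4$ will follow from the independence of the four specialized points at any single value $a=a_0$; I would take $a_0=7$ and run \texttt{mwrank} to confirm both that the specialized curve has rank $4$ and that its Mordell--Weil generators span a lattice containing the four specialized points as an independent subset.

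For the matching upper bound and the torsion, I would invoke \cite[Theorem 1.1]{GT2}, which provides conditions on a specialization value $a_0$ guaranteeing that the specialization map $E(\Q(a))\to E_{a_0}(\Q)$ is injective and preserves the torsion subgroup. Applied at $a_0=7$, these conditions amount to a finite symbolic check on the factorizations over $\Q[a]$ of the discriminant-like quantities $B(a)$ and $A(a)^2-4B(a)$, together with the analogous quantities for a $2$-isogenous curve: one must verify that none of their squarefree parts acquires an additional square factor upon setting $a=7$. Granted the criterion, $\mathrm{rank}_{\Q(a)}E\leq \mathrm{rank}_{\Q}E_{7}=4$, and since the full $2$-torsion is present generically while the specialized torsion at $a=7$ is exactly $\Z/2\Z\times\Z/2\Z$, the generic torsion equals $\Z/2\Z\times\Z/2\Z$ as well.

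The main obstacle is the bookkeeping for the Gusi\'c--Tadi\'c hypotheses at $a=7$, since $A(a)$ and $B(a)$ are of moderate degree and one has to examine several polynomial factorizations (including those coming from the $2$-isogenous curve) simultaneously. The choice $a_0=7$ is presumably dictated by the need to dodge the finitely many specialization values at which the criterion fails; once that value is in hand, everything else—verifying that the four points in \eqref{xcoord} lie on the curve, running \texttt{mwrank}, and reading off torsion—is routine.
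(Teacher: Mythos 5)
Your proposal matches the paper's own argument essentially step for step: the paper likewise obtains rank $\geq 4$ by specializing at $a=7$ and checking independence of the four specialized points with \texttt{mwrank}, then cites \cite[Theorem 1.1]{GT2} at that same specialization to conclude that the rank over $\Q(a)$ is exactly $4$ and the torsion exactly $\Z/2\Z\times\Z/2\Z$. The only cosmetic difference is that you spell out the Gusi\'c--Tadi\'c verification in more detail, whereas the paper simply asserts that $a=7$ satisfies the conditions.
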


\subsection{Construction of curves of   rank $\geq 5$  over $\Q(t)$}\label{subr5}

We have found $30$ specializations of the parameter in the above rank $4 $ curve leading to  curves having rank $\geq 5$.  In four cases with  a further specialization we get  rank $6$  over $\Q(t)$.

We present the details in one of the cases and in the other three  we just  quote the specialization of the parameter $a$ leading to  rank $6$ curves.

In order to force $9 a (-20 + 9 a) (16 + 9 a) (80 + 9 a)$ as $X$-coordinate of a new point in the rank $4$ curve we have to parametrize $10 (-20 + 9 a) (-2 + 9 a)=$ square. We get
$$
a=\frac{2 (-1 + 10 w) (1 + 10 w)}{9 (-1 + 10 w^2)}
$$
Now the curve with rank $\geq 5$  is $Y^2=X^3+ A(w) X^2+ B(w) X$ where
 \begin{align*}
A(w)&=-2 (-169 - 12020 w^2 + 678000 w^4 - 12680000 w^6 + 80000000 w^8),\\
B(w)&=(-1 + 10 w)^2 (1 + 10 w)^2 (-1 + 20 w^2) (1 + 80 w^2) (-31 +
   400 w^2) \\
   &(-41 + 500 w^2) (9 - 200 w^2 + 2000 w^4).
\end{align*}
Five independent points have the $X$-coordinates as follows
\begin{equation}\label{xcoord5}
\left\{
\begin{aligned}
x_1&= \frac{1}{9}(-1 + 10 w)^2 (1 + 10 w)^2 (1 + 80 w^2) (-41 + 500 w^2),\\
x_2&=\frac{1}{9}(-1 + 10 w) (1 + 10 w) (1 + 80 w^2) (-31 + 400 w^2) (-41 + 500 w^2),\\
x_3&=\frac{1}{49} (1 + 80 w^2) (-11 + 300 w^2)^2 (-41 + 500 w^2),\\
x_4&=(1 + 80 w^2) (-31 + 400 w^2) (9 - 200 w^2 + 2000 w^4),\\
x_5&=9 (-1 + 10 w) (1 + 10 w) (-1 + 20 w^2) (-41 + 500 w^2),\\
\end{aligned}
\right.
\end{equation}
and the quadruple is:
\begin{equation}\label{quadruple5}
\left\{
\begin{aligned}
q_1&=-\dfrac{2 (-1 + 10 w) (1 + 10 w)}{21 (-1 + 10 w^2)},\\
q_2&=-\dfrac{(1 + 80 w^2) (-41 + 500 w^2)}{42 (-1 + 10 w) (1 + 10 w) (-1 + 10 w^2)},\\
q_3&=\dfrac{160 (-1 + 10 w^2)}{21 (-1 + 10 w) (1 + 10 w)},\\
q_4&=\dfrac{3 (-1 + 40 w^2) (-13 + 100 w^2)}{14 (-1 + 10 w) (1 + 10 w) (-1 + 10 w^2)}.\\
\end{aligned}
\right.
\end{equation}

\subsection{Construction of curves of   rank $6$  over $\Q(t)$}

Now we can force  $-9 (-1 + 10 w)^2 (1 + 10 w)^2 (-1 + 20 w^2) (-41 + 500 w^2)$ as $X$ coordinate of a new point in the previous curve with rank $\geq 5$ by solving $-(-2 + 7 w) (2 + 7 w)=$ square, so we have
$$w=\frac{2 (-1 + v) (1 + v)}{7 (1 + v^2)}$$

With this choice of $w$ the curve transforms into the rank $6$ curve given by $Y^2=X^3+ A(v) X^2+ B(v) X$ where
 \begin{align*}
A(v)&=-2 (130752711 - 35202346632 v^2 + 260292593988 v^4 -
   1337869740984 v^6\\& + 1975889131370 v^8 - 1337869740984 v^{10} +
   260292593988 v^{12} - 35202346632 v^{14}\\& + 130752711 v^{16}),\\
B(v)&=-(9 - 80 v + 9 v^2) (9 + 80 v + 9 v^2) (-27 + 13 v^2)^2 (-13 +
   27 v^2)^2 \\&
   (9 + 8018 v^2 + 9 v^4) (31 - 258 v^2 + 31 v^4) (369 -
   542 v^2 + 369 v^4)\\&
    (14409 - 41564 v^2 + 400054 v^4 - 41564 v^6 +
   14409 v^8)).
\end{align*}
The $X$-coordinates of six independent points are as follows
\begin{equation}\label{xcoord6}
\left\{
\begin{aligned}
x_1&= -\frac{1}{9}(-27 + 13 v^2)^2 (-13 + 27 v^2)^2 (9 + 8018 v^2 + 9 v^4)\\& (369 -
   542 v^2 + 369 v^4),\\
x_2&=-\frac{1}{9}(9 - 80 v + 9 v^2) (9 + 80 v + 9 v^2) (-27 + 13 v^2) \\
&(-13 + 27 v^2) (9 + 8018 v^2 + 9 v^4) (369 - 542 v^2 + 369 v^4),\\
x_3&=-\frac{1}{49}(9 + 8018 v^2 + 9 v^4) (369 - 542 v^2 + 369 v^4) \\&(661 - 3478 v^2 +
   661 v^4)^2,\\
x_4&=(9 - 80 v + 9 v^2) (9 + 80 v + 9 v^2) (369 - 542 v^2 + 369 v^4) \\
&(14409 - 41564 v^2 + 400054 v^4 - 41564 v^6 + 14409 v^8),   \\
x_5&=-441 (1 + v^2)^2 (-27 + 13 v^2) (-13 + 27 v^2) (9 + 8018 v^2 +
   9 v^4)\\ & (31 - 258 v^2 + 31 v^4),\\
   x_6&=9 (-27 + 13 v^2)^2 (-13 + 27 v^2)^2 (9 + 8018 v^2 + 9 v^4) \\&(31 -
   258 v^2 + 31 v^4),\\
\end{aligned}
\right.
\end{equation}
and the quadruple becomes:
\begin{equation}\label{quadruple6}
\left\{
\begin{aligned}
q_1&=\dfrac{2 (-27 + 13 v^2) (-13 + 27 v^2)}{21 (9 + 178 v^2 + 9 v^4)},\\
q_2&=-\dfrac{(9 + 8018 v^2 + 9 v^4) (369 - 542 v^2 + 369 v^4)}{42 (-27 + 13 v^2) (-13 + 27 v^2) (9 + 178 v^2 + 9 v^4)},\\
q_3&=-\dfrac{160 (9 + 178 v^2 + 9 v^4)}{21 (-27 + 13 v^2) (-13 + 27 v^2)},\\
q_4&=\dfrac{3 (111 - 418 v^2 + 111 v^4) (237 + 2074 v^2 + 237 v^4)}{14 (-27 + 13 v^2) (-13 + 27 v^2) (9 + 178 v^2 + 9 v^4)}.
\end{aligned}
\right.
\end{equation}
By taking the specialization $v=5$ and applying \cite[Theorem 1.1]{GT2}
we see that these six points are independent and the rank over $\Q(v)$ is exactly equal to $6$ 
and the torsion group is equal to $\Z/2\Z \times \Z/2\Z$.

\begin{theo}\label{teo:3}
The elliptic curve induced by the first three components of the Diophantine quadruple~\eqref{quadruple6}
has torsion $\Z/2\Z\times  \Z/2\Z$ and  rank equal to $6$  over $\Q(v)$.
The  points with $x$-coordinate given in~\eqref{xcoord6} are of infinite order and independent.
\end{theo}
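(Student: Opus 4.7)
My plan is to follow the same strategy as in the proof of Theorem~\ref{teo:2}, combining a direct verification that the six listed $x$-coordinates give points on the curve with a specialization argument based on \cite[Theorem 1.1]{GT2}. The Weierstrass model produced by the coordinate changes of Section~\ref{subsec2.2}, applied to the triple $\{q_1,q_2,q_3\}$ from \eqref{quadruple6}, has the form $y^2 = x^3 + A(v)\,x^2 + B(v)\,x$ with the explicit $A(v)$, $B(v)$ displayed just before the statement. Since the cubic factors over $\Q(v)$ as a product of three linear factors in $x$, the torsion subgroup over $\Q(v)$ automatically contains $\Z/2\Z\times\Z/2\Z$, so only the matching upper bound is at issue.

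Next I would verify symbolically that each of the six rational functions $x_1(v),\ldots,x_6(v)$ in \eqref{xcoord6} is indeed the $x$-coordinate of a $\Q(v)$-point. For each $i$ this amounts to checking that $x_i(v)\bigl(x_i(v)^2 + A(v)\,x_i(v) + B(v)\bigr)$ is a square in $\Q(v)$; the square root then furnishes the $y$-coordinate. The provenance of the six points is clear from the construction: $x_1, x_2, x_3$ come from the generic Theorem~\ref{teo:1} after the coordinate change, $x_4$ is the point gained by imposing $n = 7/3$ in Section~\ref{subr4}, $x_5$ is the point produced by the substitution $a = 2(-1+10w)(1+10w)/(9(-1+10w^2))$ in Section~\ref{subr5}, and $x_6$ is the new point forced by the substitution $w = 2(-1+v)(1+v)/(7(1+v^2))$ made at the start of this subsection. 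Each of these verifications is straightforward polynomial bookkeeping.

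The crucial step is the specialization $v = 5$. I would compute the resulting elliptic curve over $\Q$ with \texttt{mwrank}, verifying that it has rank exactly $6$, torsion exactly $\Z/2\Z\times\Z/2\Z$, and that the six specialized $x$-coordinates give $\Q$-independent non-torsion points. Because specialization is a group homomorphism, $\Q$-independence of the specialized points implies $\Q(v)$-independence of the generic points, so the rank over $\Q(v)$ is at least $6$. To obtain the converse inequality and to pin down the torsion, I would verify that $v = 5$ satisfies the hypotheses of \cite[Theorem 1.1]{GT2} of Gusi\'c--Tadi\'c: one must check that the specialization lies outside the explicit finite set of bad values determined by the factorization of the cubic, essentially that the fiber is nonsingular and the three linear factors remain distinct and sufficiently generic at $v = 5$. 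With the criterion in force, the rank and torsion of the generic fiber coincide with those of the specialized fiber, yielding the theorem.

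The main obstacle I anticipate is not conceptual but computational: the polynomials $A(v)$ and $B(v)$ are of quite large degree, so verifying that the six prescribed rational functions in \eqref{xcoord6} really are $x$-coordinates of rational points, and checking the Gusi\'c--Tadi\'c hypotheses at $v=5$, requires careful symbolic computation with \emph{Mathematica} or equivalent. The arithmetic substance, beyond this, is concentrated entirely in the rank computation at the single fiber $v=5$.
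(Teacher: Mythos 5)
Your proposal matches the paper's own argument: the paper proves Theorem~\ref{teo:3} exactly by specializing at $v=5$ and invoking \cite[Theorem 1.1]{GT2} to conclude that the six points are independent, the rank over $\Q(v)$ is exactly $6$, and the torsion is exactly $\Z/2\Z\times\Z/2\Z$. The additional steps you spell out (symbolic verification that the $x_i(v)$ are $x$-coordinates of $\Q(v)$-points, the \texttt{mwrank} computation at the fiber, and the homomorphism property of specialization) are precisely the implicit content of the paper's terse proof, so this is essentially the same approach, correctly executed.
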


\subsection{Other curves of   rank $6$  over $\Q(t)$} The following specializations of the parameter $a$ in the rank $4$  curve of subsection~\eqref{subr4} also produce rank $6$ curves
\begin{equation}\label{cond}
\left\{
\begin{aligned}
a_1&=-\dfrac{64 (831744 - 40128 v + 4288 v^2 - 44 v^3 + v^4)}{9 (-1520 + 88 v + v^2) (-2736 - 264 v + 5 v^2)},\\
a_2&=-\dfrac{10732176 - 628992 v + 19192 v^2 - 576 v^3 + 9 v^4}{36 (-27 + v) v (-364 + 9 v)},\\
a_3&=-\dfrac{5 (-10 + 6 v + v^2) (-18 - 18 v + 5 v^2)}{9 (12 - 2 v + v^2) (3 - v + v^2)}.\\
\end{aligned}
\right.
\end{equation}

\subsection{Examples of curves of  rank $11$ over $\Q$}\label{subsec.7}
Here we give some information on curves induced by Diophantine triples with rank equal to $11$.
At present there are only few curves known with torsion group
$\Z/ 2\Z\times\Z/2\Z$ and rank~$\geq 11$ (due to Elkies, Eroshkin, Dujella and Kulesz, and the authors;
the record is a curve with rank~$15$ found by Elkies in 2009; see \cite{D2} for details).
We were able to find six new curves with rank 11 in the family from Section \ref{subr4}
for $a=-\frac{2020}{112311}$, $\frac{12130}{1349}$, $-\frac{12792}{4825}$,
$\frac{40180}{111879}$, $\frac{144050}{43983}$, $\frac{16648}{12463}$,
and one curve in the family from Section \ref{subr5} for $w=409/400$.
The curves are found by the similar sieving algorithm as in \cite{ADP},
and the rank is computed by \texttt{mwrank} \cite{mwrank}.
We give details only for the first curve corresponding to $a=-\frac{2020}{112311}$,
since it is the curve with the smallest conductor among all known curves
with torsion $\Z/ 2\Z\times\Z/2\Z$ and rank 11. Details on other mentioned curves
can be found on \cite{D2}.

The curve
\begin{multline*}
y^2=x^3+2578115108336702464000\,x^2 \\
-1552915744756258927995988436385792000000\,x,
\end{multline*}
has rank $11$ over $\Q$ and it is
induced by the rational Diophantine triple
\[
\Bigl\{\,\frac{2020}{262059},\, -\frac{215599320}{8822653},\, -\frac{199664}{2121}\,\Bigr\}.
\]
The minimal Weierstrass equation for this curve is
\begin{multline*}
y^2 = x^3 + x^2 - 3383044565362668792508854542324233 \,x \\
         \mbox{}+ 75737263950801942103904570668638566598532431572663
\end{multline*}
Torsion points are $\mathcal{O}$ and:
\begin{gather*}
[33569207139800813, 0], [-67161937901586997, 0], [33592730761786183, 0].
\end{gather*}
Independent points of infinite order are:
\begin{align*}
Q_1 &=[21966937543749263, 3467304529380020399592600], \\
Q_2 &=[33560959657313363, 5137371389991712861800], \\
Q_3 &=[34582038915915083, 319292566508771961823200], \\
Q_4 &=\Bigl[\frac{134483428957332077}{4}, \frac{96800241541200000930525}{8}\Bigr], \\
Q_5 &=[31780818196317203, 566228347995386167907400], \\
Q_6 &=\Bigl[\frac{1413627031281777887}{49}, \frac{502860429284713027048333800}{343}\Bigr], \\
Q_7 &=\Bigl[\frac{9700355475181684787}{289}, \frac{16274604587975225427493800}{4913}\Bigr], \\
Q_8 &=[33623189883813803, 12873137085655241194800], \\
Q_9 &=[33548281734503453, 9678428398238096687400], \\
Q_{10} &=\Bigl[\frac{173923494287946441475667}{2809}, \frac{72533258254815185975583870670539600}{148877}\Bigr], \\
Q_{11} &=\Bigl[-\frac{2251179780875766539120038}{34421689}, \frac{839055527133163935536879014270829625}{201952049363}\Bigr],
\end{align*}
so that its rank is at least $11$. \texttt{mwrank} (which uses 2-descent, via 2-isogeny if possible,
to unconditionally determine the rank) establishes that in fact the rank is exactly equal to $11$.

\section{Torsion   $\Z /  2 \Z\times\Z / 4 \Z$ and rank $4$ over $\Q(t)$}\label{sec3}

\subsection{Curve with torsion $\Z /  2 \Z\times\Z / 4 \Z$  and rank $4$ over $\Q(t)$ }
We just summarize  the main results in our paper \cite{DP1}. In that paper we have shown that the following triple
\begin{align*}
 a=& -\frac{(t+1) (31t^4+52t^3+22t^2-4t-1) (3t^2+2t+1)}{t(11t^4+12t^3+2t^2-4t-1) (9t^2+14t+7)}, \\
 b= & \frac{t(11t^4+12t^3+2t^2-4t-1) (9t^2+14t+7)}{(t+1) (31t^4+52t^3+22t^2-4t-1) (3t^2+2t+1)}, \\
 c =& 	\big(16(t-1)(3t+1)(t+1)t(t^2+6t+3)(3t^2+6t+1)\\ &(5t^2+2t-1)(7t^2+2t+1)\big)/\\&
((11t^4+12t^3+2t^2-4t-1)(9t^2+14t+7)\\&(31t^4+52t^3+22t^2-4t-1)(3t^2+2t+1))
 \end{align*}
  induces the  elliptic curve $$ E:\quad y^2 = x^3 + A(t)x^2 + B(t)x, $$ where\begin{align*}
 A(t)&=\\
  &2(87671889 t^{24}+854321688 t^{23}+3766024692 t^{22}+9923033928 t^{21}\\
  &+17428851514  t^{20}+21621621928 t^{19} +19950275060 t^{18}\\
  &+15200715960 t^{17}+11789354375 t^{16}
 +10470452464 t^{15}+8925222696 t^{14}\\
 &+5984900048 t^{13}+2829340620 t^{12}
 +820299856 t^{11}+59930952 t^{10}\\
 &-66320528 t^9
-35768977 t^8-9381000 t^7
 -1017244 t^6+262760  t^5\\
 &+159130 t^4+41096  t^3+6468t^2+600t+25), \\
 B(t)&=\\ &(t^2-2t-1)^2 (69t^4+148t^3+78t^2+4t+1)^2 (13t^2-2t-1)^2 \\
 & \times (9t^4+28t^3+18t^2+4t+1)^2 (11t^4+12t^3+2t^2-4t-1)^2 \\
 & \times (9t^2+14t+7)^2 (31t^4+52t^3+22t^2-4t-1)^2 (3t^2+2t+1)^2,
 \end{align*}
 having rank $\geq 4$ over $\mathbb{Q}(t)$. Indeed, it contains the points whose $X$-coordinates are
 \begin{eqnarray*}
 x_1 &\!\!=\!\!& (9t^4+28t^3+18t^2+4t+1)^2 (11t^4+12t^3+2t^2-4t-1)^2 \\
 & & \mbox{}\times (69t^4+148t^3+78t^2+4t+1)^2,\\
 x_2 &\!\!=\!\!& (3t^2+2t+1) (9t^2+14t+7)^2 (13t^2-2t-1)\\
 & & \mbox{}\times (9t^4+28t^3+18t^2+4t+1)(11t^4+12t^3+2t^2-4t-1)^2 \\
 & & \mbox{}\times (31t^4+52t^3+22t^2-4t-1), \\
 x_3 &\!\!=\!\!& (3t^2+2t+1) (9t^2+14t+7)^2 (13t^2-2t-1)  \\
& & \mbox{}\times (9t^4+28t^3+18t^2+4t+1)^2 (11t^4+12t^3+2t^2-4t-1)  \\
 & & \mbox{}\times  (69t^4+148t^3+78t^2+4t+1),  \\
 x_4 &\!\!=\!\!& -(3t^2+2t+1)^2 (9t^2+14t+7)^2 (11t^4+12t^3+2t^2-4t-1)^2 \\
 & & \mbox{}\times (31t^4+52t^3+22t^2-4t-1)^2,
 \end{eqnarray*}
 and a specialization, e.g. $t=2$, shows that the four points $P_1, P_2, P_3, P_4$, having these $X$-coordinates, are independent points of infinite order. Thus we obtained an elliptic curve over the field of rational functions with torsion group containing $\Z/2\Z \times \Z/4\Z$ and rank $\geq 4$.
 In fact, by using the algorithm of Gusi\'c and Tadi\'c \cite{GT1,GT2} to find an appropriate  
 injective specialization (e.g. $t=15)$, it can be shown that the rank over $\Q(t)$ is exactly equal to $4$ and the torsion group is equal to $\Z/2\Z \times \Z/4\Z$.
 This improves previous records (with rank $\geq 3$) for curves with this torsion group, obtained by Lecacheux \cite{L}, Elkies \cite{El} and Eroshkin \cite{Er}.

\subsection{  A curve with torsion $\Z /  2 \Z\times\Z / 4 \Z$ and rank $9$ over $\Q$}

In the same paper \cite{DP1} we presented a curve of rank $9$ over $\Q$. This is the current record for all known elliptic curves with torsion group  $\mathbb{Z}/2\mathbb{Z} \times \mathbb{Z}/4\mathbb{Z}$. Previous records with rank $8$, are due to Elkies, Eroshkin and Dujella (\cite{El,Er}).

 The minimal Weierstrass form of the curve is
 \begin{eqnarray*}
 & y^2 = x^3 + x^2 - 6141005737705911671519806644217969840\,x \\
 &   \mbox{} + 5857433177348803158586285785929631477808095171159063188.
 \end{eqnarray*}
  It is induced by the Diophantine triple $$ \Bigl\{ -\frac{126555}{2686},\, \frac{2686}{126555},\, -\frac{9107022944}{249946125} \Bigr\}. $$
The curve is obtained from the family of curves
\begin{equation*} \label{alphaT}
   y^2 = x^3 + 2 (\alpha^2 + \tau^2 + 4 \alpha^2 \tau^2 + \alpha^4 \tau^2 + \alpha^2 \tau^4)x^2+(\tau+\alpha)^2 (\alpha \tau-1)^2 (\tau-\alpha)^2 (\alpha \tau+1)^2 x,
   \end{equation*}
where
\begin{equation*} \label{XYZV}
  \tau= \frac{r^2-s^2-2t^2+2v^2}{2(r t+s v)}, \quad\alpha = \frac{r s-2 t v}{r t+s v},
\end{equation*}
for $(r,s,t,v)= (155, 54, 96, 106)$.
We obtained in \cite{DP1} also several curves with rank 8, corresponding to the parameters
$(r,s,t,v)=
(20, -11, 25,  68)$, $(82, 9, 73, 30)$, $(55, 31, 142, 15)$, $(91, 55, 33, 104)$, $(157, 127, 43, 12)$.
Here we report on finding five new curves with torsion $\Z /  2 \Z\times\Z / 4 \Z$ and rank $8$,
corresponding to the pa\-ra\-me\-ters
$(r,s,t,v)=(131, -29, 49, 96)$, $(186, -57, 62, 199)$,
$(107, 107, 149, 430)$, $(103, 103$, $168, 725)$, $(749, 749, 138, 245)$.
The details on these curves can be found on \cite{D2}.

 \section{Torsion   $\Z /  2 \Z\times\Z / 6 \Z$ and rank $2$ over $\Q(t)$}\label{sec4}

 \subsection{Torsion   $\Z /  2 \Z\times\Z / 6 \Z$ and rank $\geq 1$}\label{subsec4}

Consider the following quintuple
\begin{align*}
 a=&\dfrac{  (u^3 - 9 u)  }{6 (u^2 - 1 )}, \\
 b=&-\dfrac{ 18 ( u^2 - 1) }{4(u^3 - 9 u)}, \\
 c_1=&\dfrac{(9 - 12 u - 18 u^2 + 4 u^3 + u^4)}{6(u (u^2 + 2 u - 3))},\\
  c_2=&\dfrac{(9 + 12 u - 18 u^2 - 4 u^3 + u^4)}{6(u (u^2 - 2u - 3))},\\
  c_3=&-\dfrac{16u(u^2 - 3)}{3 (u^4 - 10u^2 + 9)  }.
  \end{align*}
  Then $\{a, b, c_1, c_2,
  c_3\}$ is a rational Diophantine quintuple. Furthermore, the Diophantine triples
  $\{a, b, c_1\}$,  $\{a, b, c_2\}$ and $\{a, b, c_3\}$
satisfy the condition of \cite[Lemma 1]{DKMS}, and thus
  the elliptic curves induced by these three triples
have torsion $\Z/2 \Z \times \Z/ 6 \Z$ and rank $\geq 1$ over $\Q (u)$. These three curves are birationally
equivalent.

The elliptic curve induced  by $\{a, b, c_1\}$ can be written as $Y^2=X^3+A(u) X^2+ B(u) X$ where
\begin{align*}
 A(u)=&81 - 162 u + 54 u^2 + 162 u^3 + 162 u^4 - 54 u^5 + 6 u^6 + 6 u^7 + u^8, \\
 B(u)=&-27 (-1 + u)^3 u^3 (3 + u)^3 (6 - 3 u + u^2) (3 + 3 u + 2 u^2).
  \end{align*}
A point of infinite order point is $P(u)$ given by
 \[
[-27 (-1 + u)^2 u^2 (3 + u)^2, 27 (-3 + u) (-1 + u)^2 u^2 (1 + u) (3 + u)^2 (-3 + 6 u + u^2)].
 \]

\subsection{Search for higher rank. First curve}
We have found  that imposing $$\frac{4(-1+u) u^2 (3+u)^2 (6-3 u+u^2) (3+3 u+2 u^2)}{(-3+u)^2}$$ as the
$X$-coordinate of a new point in the curve is equivalent to parametrize $4-u+u^2=$ square, which can be done with $u=-\frac{(-2+w) (2+w)}{1+2 w}$.
Using this specialization of the parameter we get a curve with rank $2$ over $\Q(w)$ and torsion $\Z/2 \Z \times \Z/ 6 \Z$.  The curve can be written as  $Y^2=X^3+ A_1(w) X^2+ B_1(w) X$ where
\begin{align*}
 A_1(w)=&185257 + 401184 w + 530914 w^2 + 1218012 w^3 + 1041238 w^4 -
 925272 w^5\\& - 1369658 w^6 + 53676 w^7 + 519130 w^8 + 93984 w^9 -
 59978 w^{10} - 12924 w^{11} \\&+ 3286 w^{12} + 792 w^{13} - 14 w^{14} -
 12 w^{15} + w^{16}, \\
 B_1(w)=&27 (-7 + w)^3 (-2 + w)^3 (-1 + w)^3 (1 + w)^3 (2 + w)^3 (3 +
   w)^3 (1 + 2 w)^3\\&
    (10 + 19 w^2 + 6 w^3 + w^4) (47 + 36 w - 7 w^2 -
   6 w^3 + 2 w^4).
  \end{align*}
  Two independent points of infinite order have the following $X$-coordinates
  \begin{align*}
 x_1=&-27 (-7 + w)^2 (-2 + w)^2 (-1 + w)^2 (1 + w)^2 (2 + w)^2 (3 +
   w)^2 (1 + 2 w)^2, \\
 x_2=&-\frac{4}{(-1 + 6 w + w^2)^2 } (-7 + w)^2 (-2 + w)^2 (-1 + w) (1 + w)^2 (2 + w)^2\\&
  (3 + w) (1 +
   2 w) (10 + 19 w^2 + 6 w^3 + w^4) (47 + 36 w - 7 w^2 - 6 w^3 +
   2 w^4).
  \end{align*}
 The specialization $w=57$ satisfies the conditions of \cite[Theorem 1.1]{GT2},
 which shows that the rank over $\Q(w)$ is equal to $2$.

  The triple used for the curve is as follows
   \begin{align*}
 q_1=&-\frac{(-7 + w) (-2 + w) (1 + w) (2 + w) (-1 + 6 w + w^2)}{6 (-1 + w) (3 + w) (1 + 2 w) (-5 - 2 w + w^2)} ,\\
 q_2=&-\frac{9 (-1 + w) (3 + w) (1 + 2 w) (-5 - 2 w + w^2)}{ 2 (-7 + w) (-2 + w) (1 + w) (2 + w) (-1 + 6 w + w^2) },\\
    q_3=&-\frac{(-5 - 2 w + w^2) (-1 + 6 w + w^2) (37 + 36 w - 26 w^2 - 12 w^3 + w^4)}{6 (-7 + w) (-2 + w) (-1 + w) (1 + w) (2 + w) (3 + w) (1 + 2 w)}.
  \end{align*}

  We get rank $5$ curves over $\Q$ for the following values: $w=\frac{1}{7}$,  $w=\frac{12}{11}$ and $w=\frac{33}{14}$.

  \subsection{Search for higher rank. Second curve}
  Also we have found that imposing $$\frac{4 (-1 + u) u^3 (3 + u)^3 (3 + 3 u + 2 u^2)}{(1 + u)^2
  }$$
  as the $X$-coordinate of a new point in the curve 
  is the same that solving $u^2+u+2=$ square or $u=-\frac{-2+w^2}{-1+2 w}$. This specialization of the parameter leads to another curve with torsion $\Z/2 \Z \times \Z/ 6 \Z$ and rank $2$ over $\Q(t)$. The coefficients of the curve are
  \begin{align*}
 A_2(w)=&3517 - 26568 w + 74462 w^2 - 102612 w^3 + 138610 w^4 - 283680 w^5\\& +
 346730 w^6 - 107316 w^7 - 122138 w^8 + 84648 w^9 + 7418 w^{10} -
 15084 w^{11} \\&
 + 1690 w^{12} + 576 w^{13} + 14 w^{14} - 12 w^{15} + w^{16}, \\
 B_2(w)=&27 (-1 + w)^3 (3 + w)^3 (-1 + 2 w)^3 (-2 + w^2)^3 (1 - 6 w +
   w^2)^3\\& (16 - 36 w + 17 w^2 + 6 w^3 + w^4) (5 + 7 w^2 - 6 w^3 +
   2 w^4).
  \end{align*}
   Two independent points of infinite order have the following $X$ coordinates
    \begin{align*}
 x_1=&-27 (-1 + w)^2 (3 + w)^2 (-1 + 2 w)^2 (-2 + w^2)^2 (1 - 6 w + w^2)^2, \\
 x_2=&-\frac{4}{(-1 - 2 w + w^2)^2 } (-1 + w) (3 + w) (-1 + 2 w) (-2 + w^2)^3 \\&(1 - 6 w + w^2)^3 (5 +
   7 w^2 - 6 w^3 + 2 w^4).
  \end{align*}
 The specialization $w=18$ satisfies the conditions of \cite[Theorem 1.1]{GT2} and gives
 that the rank over $\Q(w)$ is equal to $2$.

  The triple used for the curve is as follows
   \begin{align*}
 q_1=&-\frac{(-2 + w^2) (1 - 6 w + w^2) (-5 + 6 w + w^2)}{6 (-1 + w) (3 + w) (-1 + 2 w) (-1 - 2 w + w^2)} ,\\
 q_2=&\frac{9 (-1 + w) (3 + w) (-1 + 2 w) (-1 - 2 w + w^2)}{ 2 (-2 + w^2) (1 - 6 w + w^2) (-5 + 6 w + w^2)},\\
    q_3=&-\frac{(-1 - 2 w + w^2) (-5 + 6 w + w^2) (-11 + 36 w - 10 w^2 - 12 w^3 + w^4)}{6 (-1 + w) (3 + w) (-1 + 2 w) (-2 + w^2) (1 - 6 w + w^2)}.
  \end{align*}

  For the value $w=\frac{7}{19}$ the corresponding curve has rank $6$ over $\Q$. This curve was previously found by Elkies and it is the record for this kind of curves (see \cite {D2} for details of this curve). Now we have shown that in can be obtained by Diophantine triples. In fact, it is induced by the Diophantine triple
 \[
  \left\{\frac{31269599}{31628160},-\frac{23721120}{31269599},\frac{
   1461969791}{7144352640}\right\}
   \]

   \subsection{Search for higher rank. Third curve}
 In a recent paper \cite{DKMS} it is shown the existence of infinitely many rational Diophantine  sextuples.  In the proof elliptic curves are used in order to extend a family of rational Diophantine triples to a family of sextuples. It is shown that the curve induced by that triple has rank $1$ and torsion group  $\Z /  2 \Z\times\Z / 6 \Z$  over $\Q(t)$. Again in this family we are able to find a specialization that gives a curve with rank $2$.

 The triple is as follows
   \begin{align*}
a=&\frac{ 18 t (t - 1) (t + 1)  }{  (t^2 - 6 t + 1) (t^2 + 6 t + 1)      } ,\\
b=&\frac{ (t - 1) (t^2 + 6 t + 1)^2 }{ 6 t (t + 1) (t^2 - 6 t + 1) } ,\\
c=&\frac{ (t + 1) (t^2 - 6 t + 1)^2 }{ 6 t (t - 1) (t^2 + 6 t + 1)} . \\
  \end{align*}
  As said before the elliptic curve induced by the triple $\{a, b, c\}$ has torsion group $\Z /  2 \Z\times\Z / 6 \Z$  and rank $\geq 1$ over $\Q(t)$.
  We have found two possibilities in order to increase the rank, one of them by solving $1-30 t+t^2=$ square;  this gives a rank $2$  curve, but it is one of the previously shown families.
  In the other condition we have to parametrize  $t^2-3t+1=$  square or $t=-\frac{(-1 + w) (1 + w)}{3 + 2 w}$.  When written in the form $Y^2=X^3+A_3(w) X^2+B_3(w) X$ the coefficients are
   \begin{align*}
 A_3(w)=&-1899008 - 5996544 w - 1469440 w^2 + 13980672 w^3 + 14383360 w^4\\& -
 6990336 w^5 - 16274176 w^6 - 3491328 w^7 + 5554240 w^8 +
 2814336 w^9 \\&- 219616 w^{10} - 160416 w^{11} + 120808 w^{12} + 38928 w^{13} -
 856 w^{14} + w^{16},\\
 B_3(w)=&\,\,3456 (-1 + w)^3 (1 + w)^3 (3 + 2 w)^3 (-2 + w^2)^3 (-14 - 12 w +
   w^2)^3 \\&(10 + 12 w + 2 w^2 + w^4) (-44 - 24 w + 56 w^2 + 36 w^3 +
   w^4).
  \end{align*}
  The triple becomes
 \begin{align*}
 q_1=&-\frac{18 (-1 + w) (1 + w) (3 + 2 w) (-4 - 2 w + w^2) (2 + 2 w + w^2)}{(-2 + w^2) (-14 - 12 w + w^2) (-8 + 20 w^2 + 12 w^3 + w^4) },\\
 q_2=&-\frac{(-2 + w^2)^2 (-14 - 12 w + w^2)^2 (2 + 2 w + w^2)}{6 (-1 + w) (1 + w) (3 + 2 w) (-4 - 2 w + w^2) (-8 + 20 w^2 + 12 w^3 +
   w^4)},\\
    q_3=&-\frac{(-4 - 2 w + w^2) (-8 + 20 w^2 + 12 w^3 + w^4)^2}{6 (-1 + w) (1 + w) (3 + 2 w) (-2 + w^2) (-14 - 12 w + w^2) (2 + 2 w +
   w^2)}
  \end{align*}
  and the $X$-coordinates of two points of infinite order are as follows:
   \begin{align*}
 x_1=&\frac{108 (-1 + w)^2 (1 + w)^2 (3 + 2 w)^2 (-2 + w^2)^3 (-14 - 12 w +
   w^2)^3 (2 + 2 w + w^2)^2}{(10 + 12 w + 2 w^2 + w^4)^2
   },\\
 x_2=&-27 (-1 + w) (1 + w) (3 + 2 w) (-2 + w^2)^2 (-14 - 12 w +
   w^2)^2 (-4 - 2 w + w^2)^2.
  \end{align*}
 The specialization $w=14$ satisfies the conditions of \cite[Theorem 1.1]{GT2},
 so the rank over $\Q(w)$ is equal to $2$.

  For $w=-39$ we get again Elkies' rank $6$ curve with equation
  \begin{align*}
 Y^2 + XY = X^3 - 37680956700999226080263982005713090640\,X \\
         \mbox{}- 36992898397926078743894505902555362159162611772488902400,
   \end{align*}
this time induced by the Diophantine triple
\[
\Big\{\frac{7567037280}{7833785281 },
\frac{4161669360289}{569762123040},
    \frac{1359453258559}{948852707040} \Big\}. \]
Let us mentioned that this curve was originally found in \cite{El2} by the sieving
within general family of curves with torsion $\Z /  2 \Z\times\Z / 6 \Z$.
With the same method, the authors and P. Tadi\'c recently found
a new curve with torsion $\Z /  2 \Z\times\Z / 6 \Z$ and rank 6:
   \begin{align*}
Y^2 + XY + Y = X^3 - 5012222351518888614250804048874855041913\,X \\
	    \mbox{}+ 136464417579052941096027626504118630642626009794008307407656.
   \end{align*}

\section{Torsion   $\Z /  2 \Z\times\Z / 8 \Z$}\label{sec5}
It has been shown in \cite{D4} (see also \cite{CG}) that every elliptic curve over $\Q$
with torsion $\Z /  2 \Z\times\Z / 8 \Z$ is induced by a Diophantine triple.
More precisely, any such curve is induced by a Diophantine triple of the form
\begin{equation}\label{z2z8T}
\left\{ \frac{2T}{T^2-1}, \,\, -\frac{1-T^2}{2T}, \,\, \frac{6T^2-T^4-1}{2T(T^2-1)} \right\}.
\end{equation}
Since elements of this triples clearly have mixed signs,
we may ask is there any such curve induced by a rational Diophantine triple
with all three positive elements.
We show that the answer in positive.

Consider the family of Diophantine triples
\begin{gather*}
 \Bigl\{\frac{2(2w^4+4w^3+2w^2+1)(1+2w)}{w(2w+2w^2-1)(2+w)(w-1)(w+1)},
-\frac{2(w+1)^2(w-1)^2}{(2+w)(2w+2w^2-1)w}, \\
 \frac{2w^2(2+w)^2}{(2w+2w^2-1)(w-1)(w+1)} \Bigr\}.
\end{gather*}
By comparing the $j$-invariant of the elliptic curve induced by this triple
with the $j$-invariant induced by (\ref{z2z8T}), we obtain the condition
$2w^4+4w^3+2w^2+1 =$ square. The quartic curve defined by this equation is birationally 
equivalent to the elliptic curve
$Y^2 = X^3-X^2-9X+9$
with rank 1 and a generator $P=[0, 3]$.
By taking certain multiples of $P$, e.g. $3P$, $5P$, $8P$,
we get Diophantine triples with all positive elements.
E.g. let us take the point $3P=[-360/169, -8211/2197]$. By transforming it back to the quartic, we get 
$w=26/89$, and by inserting this value for $w$ in the above family of Diophantine triples,  
we obtain the following Diophantine triple with all positive elements
$$\{a,b,c\}=\Bigl\{\frac{37471518967}{1381254420}, \frac{5832225}{571948}, \frac{6251648}{1562505} \Bigr\} $$
which induces the curve with torsion $\Z /  2 \Z\times\Z / 8 \Z$ and rank $1$.
In fact, in this way we get an infinite family of elliptic curves
with torsion $\Z /  2 \Z\times\Z / 8 \Z$ and positive rank
(since the point $[0,abc]$ is of infinite order).
This family is equivalent to the family $C_{13}$ from the paper by Lecacheux \cite{L2}.

It remains an open question whether or not there exists a Diophantine triple with all positive elements
inducing an elliptic curve with torsion $\Z /  2 \Z\times\Z / 8 \Z$ and rank $0$.
A candidate for such triple is given in \cite{DM,Milj}:
$$ \Bigl\{ \frac{1884586446094351}{25415891646864180},
\frac{14442883687791636}{7402559392524605},
\frac{60340495895762708555}{14487505263205637124} \Bigr\}, $$
for which \texttt{mwrank} and \texttt{magma} give that $0 \leq$ rank $\leq 2$
(and by the parity conjecture the rank should be 0 or 2).
The construction starts with the triple $\{a,b,c\}$, where
$b=\frac{r^2-1}{a}$, $c=a+b+2r$,
and we force the point with the first coordinate $\frac{1}{abc}$ to be of order 4.
The condition becomes $-4r^4+4r^2+1=$ square. The quartic curve defined by this equation is birationally
equivalent to the elliptic curve 
$Y^2 = X^3+X^2+X+1$ with rank 1 and the generator $P=[0, 1]$.
The first multiple of $P$ producing the triple with positive elements
is $6P$ (the next is $11P$), and it yields the triple given above.

\bigskip

{\bf Acknowledgement.} The authors would like to thank Ivica Gusi\'c, Filip Najman 
and the anonymous referee for very useful
comments on the previous version of this paper.

\end{document}